\newenvironment{proof}{{\noindent \it Proof.}}{\hfill $\blacksquare$\par}
\newtheorem{theorem}{Theorem}[section]
\newtheorem{lemma}[theorem]{Lemma}
\begin{document}

\title{Extremal problems on Sombor indices of unicyclic graphs with a given diameter}
\author{Hechao Liu\thanks{Corresponding author}
 \\
{\small School of Mathematical Sciences, South China Normal University,}\\ {\small Guangzhou, 510631, P. R. China}\\
 \small {\tt hechaoliu@m.scnu.edu.cn}
}
\date{}
\maketitle
\begin{abstract}
Sombor index is a novel topological index, which was introduced by Gutman and defined for a graph $G$ as $SO(G)=\sum\limits_{uv\in E(G)}\sqrt{d_{u}^{2}+d_{v}^{2}}$, where $d_{u}=d_{G}(u)$ denotes the degree of vertex $u$ in graph $G$.

Extremal problems on the Sombor index for trees with a given diameter has been considered by
Chen et al. [H. Chen, W. Li, J. Wang, Extremal values on the Sombor index of trees, MATCH Commun. Math. Comput. Chem. 87 (2022) 23--49] and Li et al. [S. Li, Z. Wang, M. Zhang, On the extremal Sombor index of trees with a given diameter, Appl. Math. Comput. 416 (2022) 126731].
As an extension of results introduces above, we determine the maximum Sombor indices for unicyclic graphs with a fixed order and given diameter.
\end{abstract}

\noindent{\bf Keywords}: Sombor index; diameter; unicyclic graph; extremal value.

\hskip0.2cm

\noindent{\bf 2020 Mathematics Subject Classification}: 05C09, 05C92.
\maketitle

\makeatletter
\renewcommand\@makefnmark%
{\mbox{\textsuperscript{\normalfont\@thefnmark)}}}
\makeatother

 \baselineskip=0.30in

\section{Introduction}
\hskip 0.6cm
Topological indices are numerical invariants that can act as good predictors of the physicochemical properties of molecules. Consequently, some topological indices witnessed a wide range of applications in chemical sciences, pharmaceutical sciences, complex networks, toxicology and other fields.

Let $G=(V(G),E(G))$ be a connected graph with $|V(G)|=n$ and $|E(G)|=m$. Let $N_{G}(u)$ be the set of neighbor of vertex $u$, the $d_{G}(u)=|N_{G}(u)|$ denotes the degree of vertex $u$. If $d_{G}(u)=1$, then we call $u$ is a pendent vertex in $G$. Let $PV(G)$ the set of pendent vertices in $G$.
Let $d_{G}(u,v)$ be the distance between vertex $u$ and $v$ in $G$, then the diameter in $G$ is $d=\max\limits_{u,v \in V(G)} \{d_{G}(u,v)\}$.

Recently, a type of novel topological indices, (reduced) Sombor index, were introduced by Gutman, defined as \cite{gumn2021}
$$SO(G)=\sum_{uv\in E(G)}\sqrt{d_{u}^{2}+d_{v}^{2}}.$$
$$SO_{red}(G)=\sum_{uv\in E(G)}\sqrt{(d_{u}-1)^{2}+(d_{v}-1)^{2}}.$$
See\cite{aiva2021,chli2021,rirm2021,doai2021,dengt2021,fyli2021,gumn2021,guma2021,
hoxu2021,liwn2021,lizh2022,lich2021,lyhu2021,lyfh2021,tliu2021,redz2021,zylh2021} for more details about Sombor index.

Our paper is devoted to solve the extremal problem of unicyclic graphs with a given diameter, which is inspired by the recent paper \cite{chli2021}.
The paper \cite{chli2021} considered the Sombor indices of trees with given parameters, including matching number, pendent vertices, diameter, segment number, branching number, etc. Much has been written about the extremal value of topological indices of various graphs for a given diameter, for example, Li et al. \cite{lizh2022} determined the largest and the second largest Sombor indices of $n$-vertex trees with a given diameter $d\geq 4$. Jiang et al. \cite{jilu2021} determined the maximum augmented Zagreb index of trees with a given diameter. Zhong \cite{zhol2018} presented the minimum harmonic index for unicyclic graphs with a given diameter and characterize the corresponding extremal graphs. The extremal general Randi\'{c} index and general sum-connectivity index of unicyclic graphs with a given diameter was determined by Alfuraidan et al. \cite{advs2021,advs2022}. Other results see \cite{jhll2020,suik2019,ylih2013}. Motivated by \cite{zhol2018,advs2021,advs2022}, we determine the maximum Sombor indices for unicyclic graphs with a given diameter.


\section{Preliminaries}

Let $\mathcal{U}_{n,d}=\{G\ |\ G \ is\ a \ unicyclic\ graph\ with\ order\ n\ and\ diameter\ d \}$.
$U_{n}^{d}\in \mathcal{U}_{n,d}$ ($d\geq 4$) are shown in Figure 1.

\begin{figure}[ht!]
  \centering
  \scalebox{.18}[.18]{\includegraphics{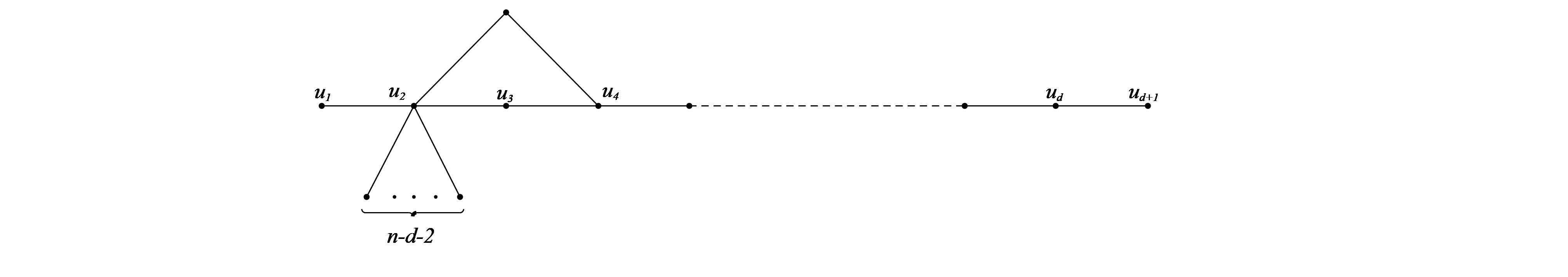}}
  \caption{The graph $U_{n}^{d}$ ($d\geq 4$).}
 \label{fig-1}
\end{figure}

For $4\leq d\leq n-2$, we have
$$SO(U_{n}^{d})=(n-d-1)\sqrt{(n-d+1)^{2}+1}+2\sqrt{(n-d+1)^{2}+4}+F_{1},$$
where $F_{1}=2\sqrt{13}+\sqrt{10}$ if $d=4$; $F_{1}=2\sqrt{2}(d-5)+3\sqrt{13}+\sqrt{5}$ if $d\geq 5$.

Similarly, we have
$$SO_{red}(U_{n}^{d})=(n-d-1)(n-d)+2\sqrt{(n-d)^{2}+1}+F_{2},$$
where $F_{2}=2\sqrt{5}+2$ if $d=4$; $F_{2}=\sqrt{2}(d-5)+3\sqrt{5}+1$ if $d\geq 5$.

In this paper, we will obtain the following results.

\begin{theorem}\label{t2-1}
Let $G\in \mathcal{U}(n,d)$, $4\leq d\leq n-2$. Then $SO(G)\leq SO(U_{n}^{d})$, with equality if and only if $G\cong U_{n}^{d}$, where $U_{n}^{d}$ is shown in Figure \ref{fig-1}.
\end{theorem}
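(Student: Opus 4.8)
The plan is to establish the bound through a finite sequence of local transformations, each strictly increasing $SO$ while preserving both the unicyclic property and the diameter $d$, and then to argue that $U_n^d$ is the unique graph admitting no further such transformation. I fix once and for all a diametral path $P : v_0 v_1 \cdots v_d$ in $G$; every other vertex lies either in a subtree pendant to some $v_i$ or on the unique cycle. The driving force behind every step is the monotonicity and convexity of the maps $g_c(x) = \sqrt{x^2 + c}$ on $(0,\infty)$ for fixed $c \ge 0$: both properties together imply that reallocating degree from a lower-degree vertex to a higher-degree one raises the edge sum. I will isolate this in a pendant-shifting lemma: if $d_G(u) \ge d_G(w) \ge 2$ and $w$ carries a pendant neighbour, then transferring that pendant from $w$ to $u$ does not decrease $SO$, with strict increase except in degenerate cases. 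Its proof is a one-variable estimate comparing $g_1(d_u+1) - g_1(d_u)$ with $g_1(d_w) - g_1(d_w-1)$ against the bookkeeping of the edges incident to $u$ and $w$.

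The first reduction is degree concentration. Using the pendant-shifting lemma repeatedly I collapse every hanging subtree into pendant edges and then migrate all pendants onto a single vertex, which the diameter constraint forces to be admissible: a vertex $v_i$ may carry pendants only when $1 \le i \le d-1$, since otherwise a pendant would sit at distance $d+1$ from an endpoint of $P$. This drives the surplus $n-d-1$ vertices onto one vertex, which becomes the centre of degree $n-d+1$ and accounts for the $n-d-1$ pendant edges of weight $\sqrt{(n-d+1)^2+1}$ in the stated formula.

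The second, and more delicate, reduction determines the length and placement of the unique cycle. Contrary to the naive guess that the extremal cycle is a triangle, the optimum is a $4$-cycle routed through the centre: if the centre $c$ is adjacent on the cycle to two degree-$2$ vertices $a$ and $b$ that share a common further neighbour $w$, then both edges $ca$ and $cb$ carry the heavier weight $\sqrt{(n-d+1)^2+4}$ rather than the pendant weight $\sqrt{(n-d+1)^2+1}$, while only one of $a,b$ needs to lie on $P$. In effect the off-path cycle vertex upgrades one incident edge of $c$ from a degree-$1$ to a degree-$2$ partner without consuming any of the diameter budget, and this is exactly the gain that defeats both the triangle and all longer cycles. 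After these reductions only finitely many candidate shapes survive, differing solely in the arrangement of the degree-$2$ and degree-$3$ vertices near the ends of $P$; I rule out every non-optimal one by directly comparing the constant term $F_1$ through the explicit weights $\sqrt5,\ \sqrt{10},\ \sqrt{13},\ 2\sqrt2$. The case $d=4$ is handled separately because the diametral path is then too short for a degree-$2$ buffer before the terminal pendant, so the last edge attaches to the degree-$3$ vertex $w$ and contributes $\sqrt{10}$ in place of $\sqrt5$.

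I expect the principal obstacle to be the diameter bookkeeping rather than the degree estimates: every relocation of a pendant, every resizing of the cycle, and every rearrangement near the ends of $P$ must be checked not to create a pair of vertices at distance exceeding $d$ nor to destroy the length-$d$ path, and these constraints interact most severely at the ends of $P$ and around the cycle, whose vertices compete with $P$ for distance. Identifying that the optimal cycle is a $C_4$ rather than a $C_3$, and verifying the terminal comparisons in both the $d=4$ and $d \ge 5$ regimes, is the technical heart of the argument; the interior degree concentration, by contrast, follows cleanly from the convexity of $g_c$.
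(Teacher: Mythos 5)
Your plan is a direct structural-reduction argument (collapse subtrees, concentrate pendants on one vertex, then optimize the cycle), which is genuinely different from the paper's route: the paper proves the base case $n=d+2$ by hand and then inducts on $n$, using Lemmas \ref{l3-4}--\ref{l3-6} to locate a pendant vertex $u_0$ whose deletion keeps the diameter equal to $d$ and whose neighbour has at least two non-pendant neighbours, so that $SO(G^*)-SO(G^*-u_0)$ can be bounded by the corresponding increment for $U_n^d$. However, your argument has a genuine gap at its foundation: the pendant-shifting lemma is false as you state it. The hypothesis $d_G(u)\ge d_G(w)\ge 2$ is not enough, because the increment $\sqrt{(x+1)^2+c}-\sqrt{x^2+c}$ is \emph{decreasing} in $c$, so a vertex whose neighbours have large degree gains almost nothing from a degree increment, while a vertex whose neighbours have small degree loses a lot from a decrement. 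Concretely, let $d_G(u)=d_G(w)=3$, let the three neighbours of $u$ each have degree $D$, and let the three neighbours of $w$ all be pendant vertices, one of them being $p$. Moving $p$ from $w$ to $u$ changes the sum of the affected edge weights by
\begin{equation}
3\left(\sqrt{D^{2}+16}-\sqrt{D^{2}+9}\right)+\sqrt{17}+2\sqrt{5}-3\sqrt{10}\approx \frac{21}{2D}-0.89,
\end{equation}
which is negative for all $D\ge 12$; such configurations are easily realized inside a unicyclic graph of prescribed diameter. So ``reallocating degree from a lower-degree vertex to a higher-degree one'' does not in general raise $SO$, and the entire degree-concentration step (collapsing hanging subtrees and migrating all pendants onto a single centre) is unsupported. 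The lemma can only be salvaged with additional control on the degrees of the neighbours of $u$ and $w$, which is exactly the information one does not have a priori about an arbitrary extremal graph; this is presumably why the paper instead deletes a pendant and inducts rather than rearranging pendants globally.

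A secondary but still real gap is the cycle analysis. The assertion that the off-path $C_4$ vertex ``upgrades one incident edge of the centre from a degree-$1$ to a degree-$2$ partner'' and that ``this is exactly the gain that defeats both the triangle and all longer cycles'' is a heuristic, not a proof: the comparison between the triangle, the $C_4$ through the centre, the $C_4$ elsewhere on the path, and cycles meeting the diametral path in $0$ or $1$ vertices each requires its own edge-by-edge computation (these occupy Claims 1--4 of Lemma \ref{l3-6} and the case analysis of Theorem \ref{t3-3} in the paper, and the inequalities are tight enough that several of them need the precise surrounding degrees, not just the weights $\sqrt5$, $\sqrt{10}$, $\sqrt{13}$, $2\sqrt2$). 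Moreover the ``finitely many candidate shapes'' surviving your reductions are never enumerated, and their finiteness depends on the degree-concentration step that, as above, has not been established. As written, the proposal does not constitute a proof.
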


\begin{theorem}\label{t2-2}
Let $G\in \mathcal{U}(n,d)$, $4\leq d\leq n-2$. Then $SO_{red}(G)\leq SO_{red}(U_{n}^{d})$, with equality if and only if $G\cong U_{n}^{d}$.
\end{theorem}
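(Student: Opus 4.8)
The plan is to mirror the strategy behind Theorem~\ref{t2-1} for the ordinary Sombor index, but with the edge weight replaced by $w(u,v)=\sqrt{(d_u-1)^2+(d_v-1)^2}$ and every numerical comparison redone for this weight. The guiding principle is that $SO_{red}$ rewards degree concentration: the pendant-edge weight at a vertex of degree $k$ is simply $k-1$ (linear), while the map $k\mapsto\sqrt{(k-1)^2+c}$ is increasing and convex for every constant $c\ge 0$. Hence pushing as many vertices as possible to be pendants of a single vertex, subject to keeping the diameter equal to $d$, should drive any graph toward $U_n^d$. Throughout I would fix a diametral path $P=v_0v_1\cdots v_d$ with $d_G(v_0)=d_G(v_d)=1$, and record the diameter constraint in the usable form: every vertex $u\notin V(P)$ lies in a tree hanging at some internal $v_i$ whose depth is at most $\min\{i,d-i\}$, and the unique cycle must sit so that it creates no geodesic longer than $d$.

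The first structural step is to show the unique cycle is a triangle. If the cycle has length $\ge 4$, I would exhibit a local transformation that replaces it by a $C_3$ and reattaches the freed vertices as pendants on a vertex of maximum degree, then verify—using the convexity of $k\mapsto\sqrt{(k-1)^2+c}$—that $SO_{red}$ strictly increases while $n$ and $d$ are preserved. This both fixes the cyclic part of $U_n^d$ and produces the degree-$2$ and degree-$3$ junction edges (the $\sqrt{5}$ and $\sqrt{2}$ terms appearing in $F_2$).

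Next I would prove a pendant-shifting lemma tailored to $SO_{red}$: if $u,v$ satisfy $d_G(u)\le d_G(v)$ and $u$ carries a pendant neighbor that may be relocated to $v$ without changing the diameter, then moving that pendant to $v$ strictly increases $SO_{red}$. The moved edge alone contributes $d_v-(d_u-1)=d_v-d_u+1\ge 1>0$; the remaining incident edges change by paired increments $\bigl[\sqrt{(d_v)^2+c}-\sqrt{(d_v-1)^2+c}\bigr]-\bigl[\sqrt{(d_u-1)^2+c'}-\sqrt{(d_u-2)^2+c'}\bigr]$, which are controlled by convexity together with $d_G(u)\le d_G(v)$. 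Iterating this lemma collapses all movable mass onto one high-degree vertex, forcing a vertex of degree $n-d+1$ adjacent to $n-d-1$ pendants and to two degree-$2$ path vertices (the term $(n-d-1)(n-d)+2\sqrt{(n-d)^2+1}$), exactly as in $U_n^d$.

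Finally I would optimize the position of this high-degree vertex and of the triangle along $P$. This reduces to comparing a finite family of candidate graphs indexed by the location parameter; I would evaluate $SO_{red}$ on each via the closed form stated in the excerpt and show $U_n^d$ is the unique maximizer, splitting the cases $d=4$ and $d\ge 5$ as the value $F_2$ does. The main obstacle is bookkeeping the diameter constraint consistently across every transformation, guaranteeing that neither pendant shifts nor the cycle contraction ever shortens the diametral path below $d$ nor lengthens some other geodesic beyond $d$; a secondary difficulty is the analysis of the edges internal to the triangle and at the degree-$2$/degree-$3$ junctions, where $\sqrt{(d_u-1)^2+(d_v-1)^2}$ must be compared term by term rather than through the simpler linear pendant formula.
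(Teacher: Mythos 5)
Your proposal fails at its first structural step: the claim that the extremal cycle is a triangle is false, and everything downstream inherits the error. The maximizer $U_n^d$ contains a quadrilateral, not a $C_3$: the paper proves $|E(C)|=4$ in the base case (Claim~2 in the proof of Theorem~\ref{t3-3}) and $|V(C)|=4$ in general (Claim~4 in the proof of Lemma~\ref{l3-6}), in each case by exhibiting an edge-move that \emph{strictly increases} the index whenever the cycle is a triangle. You can also read this off the closed form you cite: $SO_{red}(U_{n}^{d})=(n-d-1)(n-d)+2\sqrt{(n-d)^{2}+1}+F_{2}$ with $F_{2}=\sqrt{2}(d-5)+3\sqrt{5}+1$ for $d\geq 5$, where the two $\sqrt{(n-d)^{2}+1}$ terms are edges from the max-degree vertex to \emph{two} degree-$2$ vertices and $3\sqrt{5}$ counts three edges with degree pair $(2,3)$ around a $C_4$ of the form $u_{2}u_{3}u_{4}w$; a triangle through the max-degree vertex would instead force a term $\sqrt{(n-d)^{2}+2^{2}}$ (an edge to a degree-$3$ cycle vertex), which does not occur. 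Consequently, if each of your proposed moves were genuinely $SO_{red}$-increasing, your process would terminate at a triangle configuration with $SO_{red}$ strictly below $SO_{red}(U_n^d)$ --- contradicting the theorem itself --- so your convexity heuristic must fail exactly at the cycle-shape comparison, and this is not a repairable bookkeeping issue but a wrong target structure. (Your misattribution of $F_2$ confirms the confusion: the constant $1$ is the far pendant edge $\sqrt{(1-1)^2+(2-1)^2}$, and the $\sqrt{5}$'s are not ``triangle junction'' terms.)

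Separately, your plan omits the mechanism the paper actually relies on. The paper does not optimize by local moves alone: Theorem~\ref{t3-8} is proved ``similarly to Theorem~\ref{t3-7}'', i.e., by induction on $n$ with base case $n=d+2$ (the analogue of Theorem~\ref{t3-3}), where Lemmas~\ref{l3-4}--\ref{l3-6} guarantee a pendant vertex $u_{0}$ with $G^{*}-u_{0}\in\mathcal{U}_{n-1,d}$ whose neighbor has at least two non-pendant neighbors; the inductive step then bounds the increment from re-adding $u_0$ using the monotonicity facts (Lemmas~\ref{l3-1} and~\ref{l3-01}) and the constraint $k\leq n-d+1$ on the neighbor's degree. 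Your pendant-shifting lemma, even where its arithmetic is right (the moved reduced edge changes by $d_v-d_u+1$), does not by itself control the diameter the way Lemma~\ref{l3-5}/\ref{l3-6} do, and your final ``compare a finite family of candidates'' step ranges over the wrong family once the cycle shape is wrong. To salvage the outline you would need to (i) replace the triangle step by a proof that the cycle is a $C_4$ positioned through $u_2$ or $u_{d-1}$, redoing the paper's inequalities with reduced degrees, and (ii) either add the deletion-based induction or prove diameter preservation for every shift.
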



\section{Main results}

\begin{lemma}\label{l3-1}\cite{chli2021}
Let $\phi(x,y)=\sqrt{x^{2}+y^{2}}-\sqrt{(x-1)^{2}+y^{2}}$, where $x>1$ and $y>0$. Then $\phi(x,y)$ is a strictly monotonically increasing with $x$ and strictly monotonically decreasing with $y$.
\end{lemma}

\begin{lemma}\label{l3-01}\cite{lyfh2021}
Let $a(\geq 0)$, $p(\geq 1)$ be constants and $f(x)=x^{p}+(a-x)^{p}$. Then
$f(x)$ is monotonically decreasing when $x\leq \frac{a}{2}$, and $f(x)$ is monotonically increasing when $x\geq \frac{a}{2}$.
\end{lemma}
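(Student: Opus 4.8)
The plan is to prove this purely by one-variable calculus, after fixing the natural domain. Since $p\ge 1$ need not be an integer, the expressions $x^{p}$ and $(a-x)^{p}$ are real-valued precisely when $x\ge 0$ and $a-x\ge 0$; hence I would read the statement on the interval $x\in[0,a]$ (the only range relevant to the applications, where $x$ and $a-x$ play the role of nonnegative counts). On this interval both summands are well defined and $f$ is continuous, so it suffices to determine the sign of $f'$ on the open interval and extend to the endpoints by continuity.

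First I would compute the derivative on $(0,a)$:
$$f'(x)=p\,x^{p-1}-p\,(a-x)^{p-1}=p\big[x^{p-1}-(a-x)^{p-1}\big].$$
The entire argument then rests on one elementary monotonicity fact: the power function $g(t)=t^{p-1}$ is nondecreasing on $[0,\infty)$ whenever $p-1\ge 0$, and strictly increasing when $p-1>0$. Next I would split according to the position of $x$ relative to $a/2$. If $x\le a/2$ then $x\le a-x$, so $g(x)\le g(a-x)$, i.e. $x^{p-1}\le(a-x)^{p-1}$, whence $f'(x)\le 0$ and $f$ is (monotonically) decreasing there. Symmetrically, if $x\ge a/2$ then $x\ge a-x$ gives $x^{p-1}\ge(a-x)^{p-1}$, so $f'(x)\ge 0$ and $f$ is increasing. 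The common boundary $x=a/2$ is the unique interior critical point, consistent with the symmetry $f(x)=f(a-x)$, which already shows the two monotone branches are mirror images meeting at $a/2$.

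The only points needing care — and what passes for the "obstacle" in so routine a statement — are the degenerate and boundary cases. When $p=1$ one has $f'(x)\equiv 0$ and $f\equiv a$, so the monotonicity must be understood in the non-strict sense (a constant function being simultaneously nonincreasing and nondecreasing); for $p>1$ the monotonicity is strict away from $x=a/2$. At the endpoints $x=0$ and $x=a$, as well as at $x=a/2$ when $0<p-1<1$, I would simply note that $g(t)=t^{p-1}$ remains continuous and nondecreasing up to $t=0$, so the sign analysis of $f'$ carries over to the closed interval by continuity of $f$. Beyond these bookkeeping remarks I expect no difficulty: the conclusion is an immediate consequence of the monotonicity of $t\mapsto t^{p-1}$ for $p\ge 1$.
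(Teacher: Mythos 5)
Your proof is correct: the paper itself states this lemma without proof (it is imported from \cite{lyfh2021}), and your argument via the sign of $f'(x)=p\bigl[x^{p-1}-(a-x)^{p-1}\bigr]$ together with the monotonicity of $t\mapsto t^{p-1}$ for $p\ge 1$ is exactly the standard derivation behind it. Your extra care about the domain $[0,a]$ for non-integer $p$, the non-strict reading when $p=1$, and the extension to the endpoints by continuity are all sound refinements rather than deviations.
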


\begin{lemma}\label{l3-2}\cite{ctra2021}
Let $G\in \mathcal{U}_{n}$ $($$n\geq 5$$)$. Then
$SO(G)\geq 2\sqrt{2}n=SO(C_{n}),$
with equality if and only if $G\cong C_{n}$.
\end{lemma}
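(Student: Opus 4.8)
The plan is to derive the bound from two elementary, sharp inequalities, exploiting the one structural fact we have for free: a unicyclic graph on $n$ vertices has exactly $n$ edges, so that $\sum_{v\in V(G)}d_G(v)=2m=2n$. First I would replace each Sombor summand by a linear expression in the degrees using the quadratic-mean/arithmetic-mean inequality
$$\sqrt{d_u^2+d_v^2}\ \geq\ \tfrac{1}{\sqrt{2}}\,(d_u+d_v),$$
which is valid for all $d_u,d_v\ge 0$ and is an equality if and only if $d_u=d_v$. Summing over all edges and invoking the standard first-Zagreb identity $\sum_{uv\in E(G)}(d_u+d_v)=\sum_{v\in V(G)}d_v^2$ gives
$$SO(G)\ \geq\ \frac{1}{\sqrt{2}}\sum_{v\in V(G)}d_v^2.$$

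The second step is to bound $\sum_v d_v^2$ from below. By the Cauchy--Schwarz inequality (equivalently, convexity of $x\mapsto x^2$),
$$\sum_{v\in V(G)}d_v^2\ \geq\ \frac{1}{n}\Bigl(\sum_{v\in V(G)}d_v\Bigr)^{2}=\frac{(2n)^2}{n}=4n,$$
with equality if and only if all degrees are equal. Chaining the two displays yields $SO(G)\ge \frac{1}{\sqrt{2}}\cdot 4n=2\sqrt{2}\,n=SO(C_n)$, the asserted bound.

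The part I expect to need the most care is the equality analysis. Equality in the first inequality forces $d_u=d_v$ on every edge, and since $G$ is connected this is precisely the statement that $G$ is regular; equality in the second forces all degrees equal, the same condition. For a connected $k$-regular graph one has $nk=2m=2n$, hence $k=2$, and the only connected $2$-regular graph is $C_n$. Thus both inequalities are tight simultaneously exactly when $G\cong C_n$, which establishes the uniqueness. I note that this route in fact works for every $n\ge 3$, so the hypothesis $n\ge 5$ is not strictly necessary here and is presumably retained only for compatibility with the cited source.

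The genuine obstacle is therefore conceptual rather than computational: one must recognize that the two sharp inequalities share a common equality case that is compatible with the forced degree sum $2n$, so that they can be combined with no slack and still pin down a single graph. An alternative but more laborious approach would be a transformation argument, repeatedly absorbing a pendant vertex or a pendant path into the cycle and showing via a monotonicity estimate of the type in Lemma \ref{l3-1} that each such move does not increase $SO$, together with a termination argument ending at $C_n$; this works but demands far more bookkeeping, so the two-inequality proof above is preferable.
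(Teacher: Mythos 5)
Your argument is correct. Note first that the paper does not prove this lemma at all: it is quoted from the cited reference [Cruz--Rada], so there is no in-paper proof to compare against. Your two-step derivation is sound and complete: the bound $\sqrt{d_u^2+d_v^2}\geq \tfrac{1}{\sqrt{2}}(d_u+d_v)$ combined with the identity $\sum_{uv\in E(G)}(d_u+d_v)=\sum_{v}d_v^2$ gives $SO(G)\geq \tfrac{1}{\sqrt{2}}M_1(G)$, and Cauchy--Schwarz together with $m=n$ gives $M_1(G)\geq 4n$; the equality analysis (both inequalities tight $\Rightarrow$ $G$ regular $\Rightarrow$ $k=2$ $\Rightarrow$ $G\cong C_n$) is handled properly, and you are right that connectivity is what upgrades ``equal degrees across each edge'' to regularity. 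Your observation that the hypothesis $n\geq 5$ is not needed is also correct (the bound and its equality case hold for all $n\geq 3$), which only strengthens the statement as used in the paper. The one thing worth making explicit if this were written out in full is the elementary verification $SO(C_n)=n\sqrt{2^2+2^2}=2\sqrt{2}\,n$, so that the lower bound is indeed attained.
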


Throughout this article, we always suppose that $C=v_{1}v_{2}\cdots v_{|V(C)|}$ be the only one cycle and $P=u_{1}u_{2}\cdots u_{d}u_{d+1}$ be the diametral path of the unicyclic graph we considered. Let $PV(G)$ be the set of pendent vertices of $G$.

Denote by $\mathcal{U}_{n,d}^{max}$ the graph in $\mathcal{U}_{n,d}$ with maximum Sombor index.
By Lemma \ref{l3-2}, $\mathcal{U}_{n,d}^{max}$ must have at least one pendent vertex.
We also know that $n\geq d+2$, we start by considering $n=d+2$.

\begin{theorem}\label{t3-3}
Let $G\in \mathcal{U}_{d+2,d}$ $($$d\geq 4$$)$. Then
$SO(G)\leq SO(U_{d+2}^{d}),$
with equality if and only if $G\cong U_{d+2}^{d}$.
\end{theorem}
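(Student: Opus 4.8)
The plan is to exploit the fact that when $n=d+2$ the family $\mathcal{U}_{d+2,d}$ is tiny and completely explicit, so the theorem reduces to a short finite comparison. Write $P=u_1u_2\cdots u_{d+1}$ for the diametral path; it contains all but one vertex $w$, and $G$ has exactly two edges outside $P$. Since $P$ realises the diameter it is a shortest path between $u_1$ and $u_{d+1}$, so no edge of $G$ may shorten a distance along $P$. First I would observe that if $w$ were a pendant vertex, the single remaining extra edge would be a chord $u_au_b$ with $b-a\ge 2$, which pushes $d_G(u_1,u_{d+1})$ below $d$ -- a contradiction. Hence $w$ lies on the unique cycle and is adjacent to two vertices $u_i,u_j$ of $P$; a shortcut is avoided exactly when $j-i\le 2$. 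This leaves precisely two shapes: a triangle $wu_iu_{i+1}$ or a quadrilateral $wu_iu_{i+1}u_{i+2}$, each positioned along $P$.

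Next I would fix the degree sequence. In either shape the only candidates for degree $3$ are the two neighbours of $w$ on $P$, and such a vertex actually has degree $3$ only when it is interior to $P$; if an attachment point is an endpoint of $P$ the graph carries just one vertex of degree $3$, and a direct comparison shows its $SO$ is strictly smaller. So in the maximiser both attachment points are interior, and it is convenient to describe each shape by the two pendant paths (\emph{tails}) hanging from its degree-$3$ cycle vertices, of lengths $a$ and $b$. The diameter constraint forces $a+b=d-2$ for the quadrilateral and $a+b=d-1$ for the triangle, with both tails nonempty. A tail of length $\ell$ hanging from a degree-$3$ vertex contributes $T(\ell)$ to $SO$, where $T(1)=\sqrt{10}$ and $T(\ell)=\sqrt{13}+(\ell-2)\,2\sqrt2+\sqrt5$ for $\ell\ge 2$, while the cycle edges contribute a constant; thus $SO=\mathrm{const}+T(a)+T(b)$ for each shape.

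I would then maximise $T(a)+T(b)$ with $a+b$ fixed. The successive increments of $T$ are $T(2)-T(1)=\sqrt{13}+\sqrt5-\sqrt{10}$ and $T(\ell+1)-T(\ell)=2\sqrt2$ for $\ell\ge 2$; since $\sqrt{13}+\sqrt5<\sqrt{10}+2\sqrt2$ (verified by squaring), these increments are non-decreasing, so for fixed sum the total is largest at the most unbalanced split, namely one tail of length $1$. Within each shape this selects the configuration with a single leaf adjacent to a degree-$3$ cycle vertex, the monotonicity of Lemma \ref{l3-1} being the natural device to package this leaf-attachment gain. Finally I would compare the two shape-optima: substituting the tail lengths, everything cancels except a constant and the inequality $SO(C_4)>SO(C_3)$ reduces to $2\sqrt{13}>5\sqrt2$, i.e. $52>50$. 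This identifies the quadrilateral with one unit tail, which is exactly $U_{d+2}^d$, as the unique maximiser; the boundary case $d=4$ (where $a=b=1$ forces leaves at both degree-$3$ vertices) is handled identically and yields the symmetric graph $U_6^4$.

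The step I expect to be the genuine obstacle is this final pair of constant comparisons. Because the competing graphs share the same degree sequence, the differences in $SO$ are invisible to any majorisation or monotonicity principle and emerge as small positive constants; the whole theorem ultimately hinges on the razor-thin estimates $\sqrt{13}+\sqrt5<\sqrt{10}+2\sqrt2$ and $2\sqrt{13}>5\sqrt2$, each of which must be certified by squaring rather than by a structural shortcut.
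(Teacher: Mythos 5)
Your proposal is correct, and its skeleton coincides with the paper's: rule out attachment of the extra vertex $w$ at an endpoint of $P$, show the cycle must be a $C_4$ rather than a $C_3$, and then optimize the position of that $C_4$ along $P$. The execution is genuinely different, though. The paper proceeds by a chain of local edge-switches, each certified by one displayed inequality, and only at the last step compares against $U_{d+2}^{d}$; you instead exploit the finiteness of $\mathcal{U}_{d+2,d}$ to write $SO$ in closed form as a shape constant plus $T(a)+T(b)$ for the two tail lengths, and then optimize by observing that the increments of $T$ are non-decreasing (your inequality $\sqrt{13}+\sqrt{5}<\sqrt{10}+2\sqrt{2}$, i.e.\ $\sqrt{65}<\sqrt{80}$, is exactly what is needed, and since $T(a)+T(b)$ is constant for $a,b\ge 2$, it also delivers strictness and hence the uniqueness claim cleanly). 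Your final constant comparison reproduces the paper's: your $\sqrt{13}+\sqrt{5}-\sqrt{10}-2\sqrt{2}<0$ is literally the paper's displayed $\sqrt{5}+\sqrt{13}-\sqrt{10}-\sqrt{8}<0$. What your route buys is transparency and a clean equality analysis; what the paper's switch-based route buys is that the same moves recur in Lemmas 3.5--3.6 for $n\ge d+3$, where the family is no longer finite and your enumeration would not apply. Two points you should write out rather than wave at: (i) the endpoint-attachment comparison is indeed a finite check (for the triangle at $u_1$ it reduces to $\sqrt{10}-\sqrt{2}>0$), and your parenthetical appeal to Lemma \ref{l3-1} is not really the right tool for it; (ii) the $d=4$ comparison between the two shape-optima is \emph{not} $2\sqrt{13}>5\sqrt{2}$ but the different (still positive) constant $\sqrt{13}+\sqrt{10}-3\sqrt{2}-\sqrt{5}$, because there $T(d-2)-T(d-3)=T(2)-T(1)\neq 2\sqrt{2}$; so "handled identically" should be "handled by the same kind of check with a different constant."
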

\begin{proof}
Let $G^{*}\in \mathcal{U}_{d+2,d}^{max}$.
Since $n=d+2$, there exists one vertex, say $w$, out of $P=u_{1}u_{2}\cdots u_{d}u_{d+1}$. Then $C=u_{i}u_{i+1}wu_{i}$ or $C=u_{i}u_{i+1}u_{i+2}wu_{i}$.

\noindent {\bf Claim 1}. $u_{1}\cap V(C)=\varnothing$ and $u_{d+1}\cap V(C)=\varnothing$.

Suppose $u_{1}\cap V(C)\neq \varnothing$, then $C=u_{1}u_{2}wu_{1}$ or $C=u_{1}u_{2}u_{3}wu_{1}$.

\noindent {\bf Case 1}. $C=u_{1}u_{2}wu_{1}$

Let $G^{**}=G^{*}-\{u_{1}w\}+\{u_{3}w\}$, then $G^{**}\in \mathcal{U}_{d+2,d}$, and
$$SO(G^{*})-SO(G^{**})=2\sqrt{2^{2}+2^{2}}-\sqrt{1^{2}+3^{2}}-\sqrt{3^{2}+3^{2}}=\sqrt{2}-\sqrt{10}<0.$$

\noindent {\bf Case 2}. $C=u_{1}u_{2}u_{3}wu_{1}$

Let $G^{**}=G^{*}-\{u_{1}w\}+\{u_{2}w\}$, then $G^{**}\in \mathcal{U}_{d+2,d}$, and
$$SO(G^{*})-SO(G^{**})=\sqrt{2}-\sqrt{10}<0,$$
contradict with the maximality of $G^{*}$.
Thus $u_{1}\cap V(C)=\varnothing$. Similarly, we also have $u_{d+1}\cap V(C)=\varnothing$.

\noindent {\bf Claim 2}. $|E(C)|=4$

If $|E(C)|\geq 5$, then contradiction with the choice of $P=u_{1}u_{2}\cdots u_{d}u_{d+1}$. Thus $|E(C)|=3$ or $|E(C)|=4$.
If $|E(C)|=3$, then $C=u_{i}u_{i+1}wu_{i}$. By Claim 1, we also know $2\leq i\leq d-1$, and $u_{i-1}\notin PV(G^{*})$ or $u_{i+2}\notin PV(G^{*})$ (Since $d\geq 4$).
We suppose $u_{i-1}\notin PV(G^{*})$, then $d_{G^{*}}(u_{i-2})=1$ or $2$.
Let $G^{**}=G^{*}-\{u_{i}w\}+\{u_{i-1}w\}$, then $G^{**}\in \mathcal{U}_{d+2,d}$, we have
\begin{equation}
\begin{split}
SO(G^{*})-SO(G^{**})&=\sqrt{d_{G^{*}}^{2}(u_{i-2})+2^{2}}-\sqrt{d_{G^{*}}^{2}(u_{i-2})+3^{2}}+\sqrt{3^{2}+3^{2}}-\sqrt{2^{2}+3^{2}}< 0.  \nonumber
\end{split}
\end{equation}
Thus, $|E(C)|=4$.

By Claim 1 and Claim 2, we know that $2\leq i\leq d-2$. If $i\neq 2$ and $i\neq d-2$, we have
\begin{equation}
\begin{split}
SO(G^{*})-SO(U_{d+2}^{d})&=  \sqrt{2^{2}+3^{2}}+\sqrt{1^{2}+2^{2}}-\sqrt{1^{2}+3^{2}}-\sqrt{2^{2}+2^{2}} \\
&=  \sqrt{5}+\sqrt{13}-\sqrt{10}-\sqrt{8}< 0.  \nonumber
\end{split}
\end{equation}

Thus $i= 2$ or $i= d-2$, i.e., $G^{*}\cong U_{d+2}^{d}$.
\end{proof}

\ \notag\

Next, we consider $n\geq d+3$.

\begin{lemma}\label{l3-4}
Let $G^{*}\in \mathcal{U}_{n,d}^{max}$ $($$3\leq d\leq n-3$$)$. If $u\in PV(G^{*})$ and $u\sim u_{2}$ or $u\sim u_{d}$, where $u_{2}, u_{d}\in P=u_{1}u_{2}\cdots u_{d}u_{d+1}$, then $|V(C)\cap V(P)|\geq 2$.
\end{lemma}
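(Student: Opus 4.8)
The plan is to argue by contradiction from the maximality of $G^{*}$. Using the left--right symmetry of the diametral path $P=u_{1}u_{2}\cdots u_{d}u_{d+1}$, I may assume $u\sim u_{2}$, so that $u_{2}$ has the three distinct neighbours $u_{1},u_{3},u$ and hence $d_{G^{*}}(u_{2})\ge 3$; note also that $u_{1},u\notin V(C)$ since both are leaves while every cycle vertex has degree at least $2$. Suppose, for contradiction, that $|V(C)\cap V(P)|\le 1$. Since $G^{*}$ is unicyclic, it is the cycle $C$ with a forest of trees hung on its vertices; because $P$ is connected and meets $C$ in at most one vertex, every vertex of $P$ reaches $C$ through a single cut vertex $w\in V(C)$, which splits $G^{*}$ cleanly into a ``cycle side'' (the cycle together with its off-path trees) and the ``path side'' containing $u_{2}$. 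The goal is to produce $G^{**}\in\mathcal{U}_{n,d}$ with $SO(G^{**})>SO(G^{*})$, contradicting $G^{*}\in\mathcal{U}_{n,d}^{max}$.

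The mechanism is \emph{degree consolidation at the already-heavy vertex $u_{2}$} while keeping the cycle intact: I detach a suitable incidence (a pendant branch, or the whole cycle gadget) from the comparatively low-degree region around $w$ and reattach it in the neighbourhood of $u_{2}$. Since raising a degree at an edge strictly increases that edge's contribution, Lemma~\ref{l3-1} applies: shifting incidences onto the larger-degree vertex $u_{2}$ (where the relevant first coordinate of $\phi$ goes up) strictly raises every affected edge term, while the vacated incidences sit in a strictly lower-degree part of the cycle side, so the net change in $SO$ is positive. I split into the two cases $|V(C)\cap V(P)|=0$ and $|V(C)\cap V(P)|=1$; in the first the relocation acts along the $w$-to-$P$ attachment, and in the second, where the crossing vertex $u_{k}$ already carries both cycle edges and its path edges, the same consolidation is applied to an incidence at $u_{k}$.

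The delicate point---and the step I expect to be the main obstacle---is verifying that the relocation keeps the diameter exactly equal to $d$. Reattaching material near the end vertex $u_{2}$ risks creating a geodesic that runs from deep inside the relocated piece through $u_{2}$ out to the far end $u_{d+1}$ and so exceeds $d$, and conversely a badly chosen new edge could introduce a shortcut that drops the diameter below $d$. The case hypothesis $|V(C)\cap V(P)|\le 1$ is exactly what makes this controllable: the cycle then lies inside a single pendant tree and carries no $u_{1}$--$u_{d+1}$ geodesic, so I can bound the eccentricity of each relocated vertex by $d$ and confirm the diametral pair $u_{1},u_{d+1}$ is untouched. Once diameter preservation is checked in each subcase, the strict inequality $SO(G^{**})>SO(G^{*})$ contradicts the maximality of $G^{*}$, forcing $|V(C)\cap V(P)|\ge 2$.
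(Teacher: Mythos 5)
Your proposal identifies the right overall strategy (contradiction via a Sombor-increasing modification) but the transformation you actually propose does not work, and the proof is left as a plan rather than carried out. The central move --- detaching ``a pendant branch, or the whole cycle gadget'' from near the cut vertex $w$ and reattaching it in the neighbourhood of $u_{2}$ --- fails on the very point you flag as delicate. If the relocated piece is attached at $u_{2}$, any of its vertices at depth $s\geq 2$ lies at distance $s+(d-1)>d$ from $u_{d+1}$, and a cycle gadget necessarily contains vertices at depth at least $2$ from its attachment point; so relocating the cycle to $u_{2}$ always breaks the diameter. The fallback of moving ``a pendant branch'' from around $w$ is not available either, since $w$ may have no pendant branch at all (e.g.\ $d_{G^{*}}(w)=3$ with two cycle edges and one edge toward $P$). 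You acknowledge the obstacle but the assertion that you ``can bound the eccentricity of each relocated vertex by $d$'' is exactly what is false here, so the contradiction is never produced.

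There is a second, independent gap: the claim that Lemma~\ref{l3-1} makes the net change in $SO$ automatically positive because ``shifting incidences onto the larger-degree vertex $u_{2}$ strictly raises every affected edge term.'' Moving a neighbour from $w$ to another vertex \emph{decreases} the contributions of the edges remaining at $w$, and the sign of the total change depends on a term-by-term comparison of $\phi(d(u_{2})+1,\cdot)$ against $\phi(d(w),\cdot)$ over differing numbers of edges with differing second arguments. This is why the paper does not argue by blanket monotonicity: its proof performs \emph{local} surgery at the junction vertex $u_{i}$ where $C$ (or the stalk leading to $C$) meets $P$ --- shortening the connecting path onto $u_{i}$ when $|V(C)\cap V(P)|=0$, or contracting a cycle edge onto $u_{i}$ or $u_{i+1}$ when $|V(C)\cap V(P)|=1$ --- so that relocated vertices stay at essentially the same distance from $P$ (preserving the diameter for free), and then verifies $SO(G^{*})-SO(G^{**})<0$ by explicit numerical estimates in each of five subcases ($k=1$, $k=2$, $k\geq 3$, $|V(C)|=3$, $|V(C)|\geq 4$). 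Consolidating at $u_{i}$ rather than at $u_{2}$ is the idea your proposal is missing, and the explicit case-by-case inequalities are the work that remains to be done even after the correct transformation is chosen.
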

\begin{proof}
On the contrary, we suppose $|V(C)\cap V(P)|\leq 1$.

\noindent {\bf Case 1}. $|V(C)\cap V(P)|=0$

Since $|V(C)\cap V(P)|=0$, there exists a path $u_{i}z_{1}z_{2}\cdots z_{k}$ joining cycle $C$ and path $P$.
Then $3\leq i\leq d-1$ and $u_{i-1}\notin PV(G^{*})$, $u_{i+1}\notin PV(G^{*})$.

If $k\geq 2$, let $G^{**}=G^{*}-\{z_{1}z_{2}\}+\{u_{i}z_{2}\}$; if $k=1$, let $G^{**}$ be the graphs
obtained from $G^{*}$ by deleting edge $u_{i}z_{1}$, identifying $u_{i}$ and $z_{1}$, then adding a new
pendent edge to vertex $u_{i}$.
Then $G^{**}\in \mathcal{U}_{n,d}$.

\noindent {\bf Subcase 1.1}. $k=1$
\begin{equation}
\begin{split}
SO(G^{*})-SO(G^{**})&= \left(\sqrt{d_{G^{*}}^{2}(u_{i-1})+3^{2}}-\sqrt{d_{G^{*}}^{2}(u_{i-1})+5^{2}} \right)\\
&\quad  + \left(\sqrt{d_{G^{*}}^{2}(u_{i+1})+3^{2}}-\sqrt{d_{G^{*}}^{2}(u_{i+1})+5^{2}} \right)\\
&\quad  +\sqrt{3^{2}+3^{2}}+2\sqrt{3^{2}+2^{2}}-2\sqrt{2^{2}+5^{2}}-\sqrt{1^{2}+5^{2}} \\
&< 0.  \nonumber
\end{split}
\end{equation}

\noindent {\bf Subcase 1.2}. $k=2$
\begin{equation}
\begin{split}
SO(G^{*})-SO(G^{**})&= \left(\sqrt{d_{G^{*}}^{2}(u_{i-1})+3^{2}}-\sqrt{d_{G^{*}}^{2}(u_{i-1})+4^{2}} \right)\\
&\quad  + \left(\sqrt{d_{G^{*}}^{2}(u_{i+1})+3^{2}}-\sqrt{d_{G^{*}}^{2}(u_{i+1})+4^{2}} \right)\\
&\quad  +2\sqrt{2^{2}+3^{2}}-\sqrt{3^{2}+4^{2}}-\sqrt{1^{2}+4^{2}} \\
&< 0.  \nonumber
\end{split}
\end{equation}

\noindent {\bf Subcase 1.3}. $k\geq 3$
\begin{equation}
\begin{split}
SO(G^{*})-SO(G^{**})&= \left(\sqrt{d_{G^{*}}^{2}(u_{i-1})+3^{2}}-\sqrt{d_{G^{*}}^{2}(u_{i-1})+4^{2}} \right)\\
&\quad  + \left(\sqrt{d_{G^{*}}^{2}(u_{i+1})+3^{2}}-\sqrt{d_{G^{*}}^{2}(u_{i+1})+4^{2}} \right)\\
&\quad  +\sqrt{2^{2}+3^{2}}-\sqrt{2^{2}+4^{2}} \\
&< 0.  \nonumber
\end{split}
\end{equation}

\noindent {\bf Case 2}. $|V(C)\cap V(P)|=1$

Due to $|V(C)\cap V(P)|=1$, we may assume that $C=v_{1}v_{2}v_{3}\cdots v_{|C|}v_{1}$ and $u_{i}(v_{1})=V(C)\cap V(P)$.

\noindent {\bf Subcase 2.1}. $|V(C)|=3$

As $d>2$, then $d_{G^{*}}(u_{i-1})\geq 2$ or $d_{G^{*}}(u_{i+1})\geq 2$.
Without loss of generality, we let $d_{G^{*}}(u_{i+1})\geq 2$. We know that $d_{G^{*}}(u_{i})\geq 4$, $d_{G^{*}}(u_{i+2})\geq 1$.

Let $G^{**}=G^{*}-\{v_{2}v_{3}\}+\{v_{2}u_{i+1}\}$, then $G^{**}\in \mathcal{U}_{n,d}$. By Lemma \ref{l3-1} and $d_{G^{*}}(u_{i})\geq 4$, we have
\begin{equation}
\begin{split}
& SO(G^{*})-SO(G^{**})\\
& = \sqrt{2^{2}+2^{2}}-\sqrt{(d_{G^{*}}(u_{i+1})+1)^{2}+2^{2}}+ \sqrt{d_{G^{*}}^{2}(u_{i})+2^{2}}- \sqrt{d_{G^{*}}^{2}(u_{i})+1^{2}}\\
&\quad  +\sqrt{d_{G^{*}}^{2}(u_{i})+d_{G^{*}}^{2}(u_{i+1})}-\sqrt{d_{G^{*}}^{2}(u_{i})+(d_{G^{*}}(u_{i+1})+1)^{2}}\\
&\quad  +\sqrt{d_{G^{*}}^{2}(u_{i+1})+d_{G^{*}}^{2}(u_{i+2})}-\sqrt{(d_{G^{*}}(u_{i+1})+1)^{2}+ d_{G^{*}}^{2}(u_{i+2})}\\
&<     2\sqrt{2}-\sqrt{13}+\sqrt{d_{G^{*}}^{2}(u_{i})+2^{2}}- \sqrt{d_{G^{*}}^{2}(u_{i})+1^{2}} \\
&\leq   2\sqrt{2}-\sqrt{13}+2\sqrt{5}-\sqrt{17} \\
&< 0.  \nonumber
\end{split}
\end{equation}

\noindent {\bf Subcase 2.2}. $|V(C)|\geq 4$

Since $|V(C)|\geq 4$, we have $3\leq i\leq d-1$.
Let $C=v_{1}v_{2}\cdots v_{|V(C)|}$ $(v_{1}=u_{i})$.

Let $G^{**}=G^{*}-\{v_{2}v_{3}\}+\{u_{i}v_{3}\}$, then $G^{**}\in \mathcal{U}_{n,d}$.
\begin{equation}
\begin{split}
& SO(G^{*})-SO(G^{**})\\
& = \left( \sqrt{d_{G^{*}}^{2}(u_{i-1})+4^{2}}-\sqrt{d_{G^{*}}^{2}(u_{i-1})+5^{2}} \right)+ \left( \sqrt{d_{G^{*}}^{2}(u_{i+1})+4^{2}}-\sqrt{d_{G^{*}}^{2}(u_{i+1})+5^{2}} \right)\\
&\quad  +2\sqrt{2^{2}+4^{2}}-2\sqrt{2^{2}+5^{2}}+\sqrt{2^{2}+2^{2}}-\sqrt{1^{2}+5^{2}}\\
&< 0.  \nonumber
\end{split}
\end{equation}

Combine {\bf Case 1} and {\bf Case 2}, we know $|V(C)\cap V(P)|\geq 2$.
\end{proof}

\begin{lemma}\label{l3-5}
Let $G^{*}\in \mathcal{U}_{n,d}^{max}$ $($$3\leq d\leq n-3$$)$. There must exist a vertex $u_{0}\in VP(G^{*})$, we have $G^{*}-u_{0}\in \mathcal{U}_{n-1,d}$.
\end{lemma}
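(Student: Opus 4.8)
The plan is to reduce the statement to a property of distances and then to isolate the single configuration that could obstruct it. First I would record the two general facts about deleting a pendent vertex $u_{0}$ from a connected unicyclic graph: the cycle $C$ and every remaining adjacency are untouched, so $G^{*}-u_{0}$ is again connected and unicyclic on $n-1$ vertices; and since a pendent vertex is never an interior vertex of any shortest path, no distance between two surviving vertices can increase. Hence $G^{*}-u_{0}$ has diameter at most $d$ automatically, and the whole lemma comes down to exhibiting one pendent vertex $u_{0}$ whose deletion still leaves some pair of vertices at distance exactly $d$.

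With this in hand, the easy case is immediate: if $G^{*}$ possesses any pendent vertex $u_{0}\notin\{u_{1},u_{d+1}\}$, then the diametral path $P=u_{1}u_{2}\cdots u_{d+1}$ survives intact in $G^{*}-u_{0}$, so $d_{G^{*}-u_{0}}(u_{1},u_{d+1})=d$ and $G^{*}-u_{0}\in\mathcal{U}_{n-1,d}$. By Lemma \ref{l3-2} the maximizer $G^{*}$ is not a cycle, so it has at least one pendent vertex; the only way the easy case can fail is that $PV(G^{*})=\{u_{1},u_{d+1}\}$ and, moreover, deleting either endpoint strictly lowers the diameter (otherwise that endpoint already serves as $u_{0}$). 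I would show this residual configuration is incompatible with the maximality of $G^{*}$.

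In that residual case the structure is forced: since the only leaves of the tree part of $G^{*}$ are $u_{1}$ and $u_{d+1}$, the graph is exactly the cycle $C$ with (at most) two pendent paths attached, terminating at $u_{1}$ and at $u_{d+1}$, and the hypothesis $d\le n-3$ guarantees that the cycle carries at least two vertices lying off $P$ (the internal vertices of its longer arc). My plan is then to perform a local surgery on that longer arc --- detaching one off-path cycle edge and re-attaching it so as to shorten the cycle by one and create a new pendent vertex at the cycle vertex where a diametral pendent path meets $C$ --- in the same spirit as the moves in Lemma \ref{l3-4}. This keeps the graph unicyclic, leaves the diametral path (and hence the value $d$) unchanged, and concentrates degree at one vertex; by Lemma \ref{l3-1} the affected edge-weight changes are all favourable, so the resulting graph lies in $\mathcal{U}_{n,d}$ with strictly larger Sombor index, contradicting $G^{*}\in\mathcal{U}_{n,d}^{max}$.

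The main obstacle I anticipate is precisely this residual case: I must choose the surgery so that it simultaneously (i) preserves connectivity and the single cycle, (ii) does not disturb the realized distance $d$ --- this is where the guarantee $d\le n-3$, giving slack on the longer arc, is essential --- and (iii) yields a provably positive change in $SO$, which requires a short explicit estimate of the few affected terms together with the monotonicity of $\phi$ from Lemma \ref{l3-1}. The routine easy case and the general distance bookkeeping are straightforward; the care lies entirely in verifying that the improving transformation is legal and strictly increases the index.
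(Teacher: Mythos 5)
Your overall strategy is the same as the paper's: argue by contradiction, observe that failure forces $PV(G^{*})\subseteq\{u_{1},u_{d+1}\}$ so that $G^{*}$ is the cycle with at most two pendent paths attached, and then defeat this configuration by a local surgery that stays inside $\mathcal{U}_{n,d}$ while strictly increasing $SO$. However, the surgery step has a genuine gap. Write the cycle as $C=u_{i}u_{i+1}\cdots u_{i+l}v_{k}v_{k-1}\cdots v_{2}u_{i}$, so the two arcs joining the junction vertices $u_{i}$ and $u_{i+l}$ have $l$ and $k$ edges with $l\le k$ (and note you should invoke Lemma \ref{l3-4} explicitly to get $|V(C)\cap V(P)|\ge 2$; otherwise both pendent paths could attach at a single cycle vertex and ``the cycle vertex where a diametral pendent path meets $C$'' together with ``off-path cycle edge'' would not determine your move). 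The hypothesis $d\le n-3$ gives only $k\ge 3$; it does \emph{not} give $k>l$. Your single move --- delete the off-path edge $v_{2}v_{3}$ and add $u_{i}v_{3}$, so the cycle shrinks by one and $v_{2}$ becomes pendent at $u_{i}$ --- preserves $d(u_{i},u_{i+l})$ only when $k-1\ge l$. In the equal-arcs case $k=l$ (for instance a $C_{2l}$ of which $P$ uses exactly half), the new arc $u_{i}v_{3}v_{4}\cdots v_{k}u_{i+l}$ has $k-1=l-1$ edges, so $d(u_{1},u_{d+1})$ drops to $d-1$, the resulting graph leaves $\mathcal{U}_{n,d}$, and no contradiction with the maximality of $G^{*}$ is obtained.

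This is precisely why the paper's proof splits into two cases: for $k>l$ it uses your move, but for $k=l$ it uses a different two-edge surgery (delete $u_{i}v_{2}$ and $v_{2}v_{3}$, add $u_{i+1}v_{2}$ and $u_{i+1}v_{3}$), which shrinks both arcs between the new junction vertices to $l-1$ edges each and therefore keeps the realized distance $d$ intact. Your proposal needs this second surgery (or an equivalent one) to be complete. The remaining ingredients --- the reduction to the residual case, the observation that deleting a pendent vertex cannot increase any distance, and the acknowledgment that the $SO$-comparison requires explicit numerical estimates and not just the monotonicity of $\phi$ from Lemma \ref{l3-1} --- are correct and agree with the paper.
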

\begin{proof}
On the contrary, we suppose that $G^{*}-u\in \mathcal{U}_{n-1,d-1}$ for all vertices $u\in VP(G^{*})$.
By Lemma \ref{l3-2}, we know $VP(G^{*})\neq \varnothing$.

Let $P=u_{1}u_{2}\cdots u_{d}u_{d+1}$ be a diameter path of $G^{*}$ and $u_{1}\in VP(G^{*})$. Since $G^{*}-u\in \mathcal{U}_{n-1,d-1}$ for all vertices $u\in VP(G^{*})$, we have
$VP(G^{*})=\{u_{1},u_{d+1}\}$.
By Lemma \ref{l3-4}, $|V(C)\cap V(P)|\geq 2$.

Let $C=u_{i}u_{i+1}\cdots u_{i+l}v_{k}v_{k-1}\cdots v_{3}v_{2}v_{1}(u_{i}) $ $(l\geq 1)$.
It is obvious that $l\leq k$. Since $n\geq d+3$, then $k\geq 3$. $d_{G^{*}}(u_{i-1})\geq 1$, $d_{G^{*}}(u_{i})=3$, $d_{G^{*}}(v_{2})= d_{G^{*}}(v_{3})=2$.

\noindent {\bf Case 1}. $k=l$

Note that in this case, $k=l\geq 3$.
Let $G^{**}=G^{*}-\{u_{i}v_{2}\}-\{v_{2}v_{3}\}+ \{u_{i+1}v_{2}\}+ \{u_{i+1}v_{3}\}$.
\begin{equation}
\begin{split}
& SO(G^{*})-SO(G^{**})\\
& = \sqrt{d_{G^{*}}^{2}(u_{i-1})+3^{2}}-\sqrt{d_{G^{*}}^{2}(u_{i-1})+2^{2}}+2\sqrt{2^{2}+2^{2}}+2\sqrt{2^{2}+3^{2}}-3\sqrt{2^{2}+4^{2}}-\sqrt{1^{2}+4^{2}}. \nonumber
\end{split}
\end{equation}

\noindent {\bf Subcase 1.1}. $d_{G^{*}}(u_{i-1})=1$
$$SO(G^{*})-SO(G^{**})= \sqrt{10}-\sqrt{5}+4\sqrt{2}+2\sqrt{13}-6\sqrt{5}-\sqrt{17}<0.$$

\noindent {\bf Subcase 1.2}. $d_{G^{*}}(u_{i-1})\geq 2$
\begin{equation}
\begin{split}
& SO(G^{*})-SO(G^{**})\\
& \leq \sqrt{2^{2}+3^{2}}-\sqrt{2^{2}+2^{2}}+2\sqrt{2^{2}+2^{2}}+2\sqrt{2^{2}+3^{2}}-3\sqrt{2^{2}+4^{2}}-\sqrt{1^{2}+4^{2}}\\
& =2\sqrt{2}+3\sqrt{13}-6\sqrt{5}-\sqrt{17}<0. \nonumber
\end{split}
\end{equation}

\noindent {\bf Case 2}. $k>l$

Let $G^{**}=G^{*}-\{v_{2}v_{3}\}+\{u_{i}v_{3}\}$.
\begin{equation}
\begin{split}
& SO(G^{*})-SO(G^{**})\\
& = \sqrt{d_{G^{*}}^{2}(u_{i-1})+3^{2}}-\sqrt{d_{G^{*}}^{2}(u_{i-1})+4^{2}}+ \sqrt{d_{G^{*}}^{2}(u_{i+1})+3^{2}}-\sqrt{d_{G^{*}}^{2}(u_{i+1})+4^{2}}\\
&\quad +\sqrt{2^{2}+2^{2}}+\sqrt{2^{2}+3^{2}}-\sqrt{2^{2}+4^{2}}-\sqrt{1^{2}+4^{2}}\\
& < 2\sqrt{2}+\sqrt{13}-2\sqrt{5}-\sqrt{17}<0.\\ \nonumber
\end{split}
\end{equation}
This is a contradiction with $G^{*}\in \mathcal{U}_{n,d}^{max}$, thus the conclusion holds.
\end{proof}

\begin{lemma}\label{l3-6}
Let $G^{*}\in \mathcal{U}_{n,d}^{max}$ $($$4\leq d\leq n-3$$)$. Denote $\mathcal{U}^{*}=\{u\in VP(G^{*})| G^{*}-u\in \mathcal{U}_{n-1,d}\}$.
Let $v\in \bigcup\limits_{u\in \mathcal{U}^{*}}N_{G^{*}}(u)$, $Q_{G^{*}}(v)=\{w\in N_{G^{*}}(v)| d_{G^{*}}(w)\geq 2\}$.
Then there must exist a vertex $u_{0}\in VP(G^{*})$, $G^{*}-u_{0}\in \mathcal{U}_{n-1,d}$, and $|Q_{G^{*}}(N_{G^{*}}(u_{0}))|\geq 2$.
\end{lemma}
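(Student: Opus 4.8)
The plan is to argue by contradiction. Suppose that every $u\in\mathcal{U}^{*}$ satisfies $|Q_{G^{*}}(N_{G^{*}}(u))|\le 1$; since $\mathcal{U}^{*}\neq\varnothing$ by Lemma~\ref{l3-5}, such a $u$ exists. First I would rule out the value $0$: if $v=N_{G^{*}}(u)$ had no neighbour of degree $\ge 2$, then every neighbour of $v$ would be pendent, forcing the connected graph $G^{*}$ to be a star $K_{1,d_{G^{*}}(v)}$, a tree and hence not unicyclic. So the contradiction hypothesis gives $|Q_{G^{*}}(v)|=1$ for each such $v$: it has a single non-pendent neighbour $w$ (with $d_{G^{*}}(w)\ge 2$), and $v$ together with its pendent leaves is a broom attached to the rest of $G^{*}$ only through $w$. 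Two structural observations then pin down the position of $v$: a cycle vertex always has two cycle-neighbours of degree $\ge 2$, so $v$ lies off $C$; and if $v=u_{j}\in P$ with $3\le j\le d-1$, both $u_{j-1},u_{j+1}$ have degree $\ge 2$, giving $|Q_{G^{*}}(v)|\ge 2$. Hence either $v$ lies off both $P$ and $C$, or $v\in\{u_{2},u_{d}\}$.

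Next I would exhibit a Sombor-increasing move on this broom. Let $p_{1},\dots,p_{s}$ ($s=d_{G^{*}}(v)-1\ge 1$) be the pendent neighbours of $v$, write $a=d_{G^{*}}(w)\ge 2$, and set $G^{**}=G^{*}-\{p_{i}v:1\le i\le s\}+\{p_{i}w:1\le i\le s\}$, so that $d_{G^{**}}(v)=1$ and $d_{G^{**}}(w)=a+s$. Then each relocated leaf edge changes from $\sqrt{1+(s+1)^{2}}$ to $\sqrt{1+(a+s)^{2}}$, which increases because $a>1$; the link $vw$ changes from $\sqrt{(s+1)^{2}+a^{2}}$ to $\sqrt{1+(a+s)^{2}}$, and $1+(a+s)^{2}-(s+1)^{2}-a^{2}=2s(a-1)>0$; finally every edge joining $w$ to its other neighbours has its $w$-degree raised from $a$ to $a+s$, so by Lemma~\ref{l3-1} each such term increases. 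Summing, $SO(G^{**})>SO(G^{*})$.

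The main obstacle is confirming that $G^{**}$ still lies in $\mathcal{U}_{n,d}$. Unicyclicity, connectedness and the order $n$ are immediate, since only pendent edges are moved and the cycle is untouched; and relocating leaves towards the centre creates no distance exceeding $d$, so $\mathrm{diam}(G^{**})\le d$. The delicate point is the reverse inequality $\mathrm{diam}(G^{**})\ge d$, i.e. that a diametral path survives, and I would dispose of it using the case split above. If $v$ lies off both $P$ and $C$, then no relocated edge lies on $P$ (in particular no leaf of $v$ is an endpoint $u_{1},u_{d+1}$), so $P$ persists and $\mathrm{diam}(G^{**})=d$. The awkward cases are $v=u_{2}$ (so $w=u_{3}$) and, symmetrically, $v=u_{d}$, where the endpoint $u_{1}$ is itself a leaf of $v$ and moving it would destroy $P$; here I would keep one leaf fixed at $v$ to preserve the spine edge $u_{1}u_{2}$, relocate only the remaining $s-1$ leaves, and re-examine the edge accounting to confirm the strict increase persists, or else invoke Lemma~\ref{l3-4} together with $u\in\mathcal{U}^{*}$ to produce an alternative diametral path avoiding the moved leaves. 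In every case $G^{**}\in\mathcal{U}_{n,d}$ with $SO(G^{**})>SO(G^{*})$, contradicting $G^{*}\in\mathcal{U}_{n,d}^{max}$; therefore some $u_{0}\in\mathcal{U}^{*}$ must satisfy $|Q_{G^{*}}(N_{G^{*}}(u_{0}))|\ge 2$.
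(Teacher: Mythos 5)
Your opening is sound and your main transformation is correct: under the contradiction hypothesis each relevant support vertex $v$ has a unique non-pendent neighbour $w$, and sliding all of $v$'s leaves onto $w$ strictly increases every affected term (the leaf edges because $d_{G^*}(w)\ge 2$, the edge $vw$ because $1+(a+s)^2-(s+1)^2-a^2=2s(a-1)>0$, the remaining edges at $w$ by Lemma~\ref{l3-1}). When $v$ lies off both $P$ and $C$ this settles matters, and it is in fact a cleaner move than the paper's Claim~1 transformation, which pulls $w$'s branches onto $v$ instead. Your localization of $v$ (off $C$, and off $P$ except possibly $v\in\{u_2,u_d\}$) also matches the paper's Claim~1.

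The genuine gap is the case $v\in\{u_2,u_d\}$, which you dispose of in two sentences but which is where essentially all of the paper's work lies (its Claims~2--4 plus a final direct comparison of $SO(G^{*})$ with $SO(U_n^{d})$). Neither of your proposed patches works. First, ``keep one leaf at $u_2$ and move the other $s-1$'' is not Sombor-increasing: the retained edge $u_1u_2$ drops from $\sqrt{1+(s+1)^2}$ to $\sqrt{1+2^2}$, and when $d_{G^*}(u_3)=2$ the total change is
\begin{equation}
\left(\sqrt{(s+1)^{2}+d_{G^*}^{2}(u_{4})}-\sqrt{2^{2}+d_{G^*}^{2}(u_{4})}\right)-\left(\sqrt{(s+1)^{2}+1^{2}}-\sqrt{2^{2}+1^{2}}\right)<0 \nonumber
\end{equation}
by exactly the monotonicity in Lemma~\ref{l3-1} (the difference is decreasing in the second argument and $d_{G^*}(u_4)\ge 2>1$), so this move gives no contradiction. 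Second, Lemma~\ref{l3-4} only yields $|V(C)\cap V(P)|\ge 2$; it does not produce a diametral path avoiding all leaves of $u_2$, and none need exist --- in the extremal graph $U_n^d$ itself every diametral path starts at a pendent neighbour of the high-degree vertex, so moving all of them necessarily destroys the diameter. Thus the subcase in which every $u\in\mathcal{U}^*$ hangs at $u_2$ or $u_d$ remains open in your argument; the paper resolves it not by a local exchange at $u_2$ but by pinning down the global structure ($|V(C)\setminus V(P)|=1$, $d_{G^*}(u_{d+1})=1$, $|V(C)|=4$) and then showing directly that $SO(G^{*})<SO(U_n^{d})$, contradicting maximality. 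You would need to supply an argument of comparable substance for this case.
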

\begin{proof}
By Lemma \ref{l3-5}, $\mathcal{U}^{*}\neq \varnothing$.
We also know $|Q_{G^{*}}(v)|\geq 1$.

Suppose that for all $v\in \bigcup\limits_{u\in \mathcal{U}^{*}}N_{G^{*}}(u)$, we have $|Q_{G^{*}}(v)|= 1$.
Suppose that $C=v_{1}v_{2}\cdots v_{|V(C)|}$ be the only one cycle and $P=u_{1}u_{2}\cdots u_{d}u_{d+1}$ be the diametral path of $G^{*}$.

\noindent {\bf Claim 1}. $\mathcal{U}^{*}\subseteq N_{G^{*}}(u_{2})\bigcup N_{G^{*}}(u_{d})$

If there exists $u\in \mathcal{U}^{*}$, but $u\notin N_{G^{*}}(u_{2})\bigcup N_{G^{*}}(u_{d})$. Let
$N_{G^{*}}(u)=v$, then $v\notin \{u_{2}, u_{d}\}$.
We also know $v\notin \{u_{1}, u_{d+1}\}$.
As $|Q_{G^{*}}(v)|= 1$ for all $v\in \bigcup\limits_{u\in \mathcal{U}^{*}}N_{G^{*}}(u)$, then $v\notin V(P)\bigcup V(C)$.

Let $w\in N_{G^{*}}(v)$, $d_{G^{*}}(w)=t+1\geq 2$, $N_{G^{*}}(w)=\{v,x_{1},x_{2},\cdots, x_{t}\}$.

Let $G^{**}=G^{*}-\bigcup\limits_{1\leq i\leq t}\{x_{i}w\}+\bigcup\limits_{1\leq i\leq t}\{x_{i}v\}$, then $G^{**}\in \mathcal{U}_{n,d}$. By Lemma \ref{l3-01}, we have
\begin{equation}
\begin{split}
& SO(G^{*})-SO(G^{**})\\
& = \sum_{i=1}^{t}\left( \sqrt{(t+1)^{2}+d_{G^{*}}^{2}(x_{i})}- \sqrt{(d_{G^{*}}(v)+t)^{2}+d_{G^{*}}^{2}(x_{i})}   \right)\\
&\quad +(d_{G^{*}}(v)-1)\left( \sqrt{d_{G^{*}}^{2}(v)+1^{2}}- \sqrt{(d_{G^{*}}(v)+t)^{2}+1^{2}}   \right)\\
&\quad +\left( \sqrt{(t+1)^{2}+d_{G^{*}}^{2}(v)}- \sqrt{(d_{G^{*}}(v)+t)^{2}+1^{2}}   \right)\\
& < 0. \nonumber
\end{split}
\end{equation}
Thus, {\bf Claim 1} holds.

Since $\mathcal{U}^{*}=\{u\in VP(G^{*})| G^{*}-u\in \mathcal{U}_{n-1,d}\}\neq \varnothing$ and $\mathcal{U}^{*}\subseteq N_{G^{*}}(u_{2})\bigcup N_{G^{*}}(u_{d})$, we suppose that there exists $u\in \mathcal{U}^{*}$ and $u\in N_{G^{*}}(u_{2})$, then $|Q_{G^{*}}(u_{2})|=1$.
Thus $u_{1}\notin C$, $u_{2}\notin C$ (otherwise $|Q_{G^{*}}(u_{2})|\geq 2$), and by {\bf Claim 1}, we have $d_{G^{*}}(u_{2})\geq 3$ (otherwise $G^{*}-u_{1}\in \mathcal{U}_{n-1,d-1}$). Thus by Lemma \ref{l3-4}, $|V(C)\cap V(P)|\geq 2$.

Let $C=u_{i}u_{i+1}\cdots u_{i+l}v_{k}v_{k-1}\cdots v_{3}v_{2}v_{1}(u_{i})$ $(l\geq 1)$. It is obvious that $l\leq k$.

\noindent {\bf Claim 2}. $|V(C)\backslash V(P)|=1$

On the contrary, we suppose $|V(C)\backslash V(P)|\geq 2$, then $k\geq 3$.

\noindent {\bf Case 1}. $k=l$

Let $G^{**}=G^{*}-\{u_{i}v_{2}\}-\{v_{2}v_{3}\}+\{u_{i+1}v_{2}+\{u_{i+1}v_{3}\}$.
\begin{equation}
\begin{split}
& SO(G^{*})-SO(G^{**})\\
& = \left( \sqrt{d_{G^{*}}^{2}(u_{i+2})+2^{2}}- \sqrt{d_{G^{*}}^{2}(u_{i+2})+4^{2}}   \right)+ \left( \sqrt{d_{G^{*}}^{2}(u_{i-1})+3^{2}}- \sqrt{d_{G^{*}}^{2}(u_{i-1})+2^{2}}   \right)\\
&\quad +2\sqrt{2^{2}+3^{2}}+\sqrt{2^{2}+2^{2}}-2\sqrt{2^{2}+4^{2}}-\sqrt{1^{2}+4^{2}}\\
& < \sqrt{2^{2}+3^{2}}-\sqrt{2^{2}+2^{2}}  +2\sqrt{2^{2}+3^{2}}+\sqrt{2^{2}+2^{2}}-2\sqrt{2^{2}+4^{2}}-\sqrt{1^{2}+4^{2}} \\
& = 3\sqrt{13}-4\sqrt{5}-\sqrt{17}\\
& < 0. \nonumber
\end{split}
\end{equation}

\noindent {\bf Case 2}. $k>l$

Let $G^{**}=G^{*}-\{v_{2}v_{3}\}+\{u_{i}v_{3}\}$.
\begin{equation}
\begin{split}
& SO(G^{*})-SO(G^{**})\\
& = \left( \sqrt{d_{G^{*}}^{2}(u_{i-1})+3^{2}}- \sqrt{d_{G^{*}}^{2}(u_{i-1})+4^{2}}   \right)+ \left( \sqrt{d_{G^{*}}^{2}(u_{i+1})+3^{2}}- \sqrt{d_{G^{*}}^{2}(u_{i+1})+4^{2}}   \right)\\
&\quad +\sqrt{2^{2}+3^{2}}+\sqrt{2^{2}+2^{2}}-\sqrt{2^{2}+4^{2}}-\sqrt{1^{2}+4^{2}}\\
& < 0. \nonumber
\end{split}
\end{equation}
Thus, {\bf Claim 2} holds.

\noindent {\bf Claim 3}. $d_{G^{*}}(u_{d+1})=1$

On the contrary, we suppose $d_{G^{*}}(u_{d+1})=2$. Since $|Q_{G^{*}}(v)|= 1$ for all $v\in \bigcup\limits_{u\in \mathcal{U}^{*}}N_{G^{*}}(u)$.
Thus $u_{d}$ does not connect to a pendent vertex. As $d\geq 4$, then $d_{G^{*}}(u_{d-2})\geq 2$. Since $|V(C)\backslash V(P)|=1$ (by {\bf Claim 2}), then $|V(C)|=3$ or $4$.

\noindent {\bf Case 1}. $|V(C)|=3$

The only one cycle $C=u_{d}u_{d+1}wu_{d}$. Let $G^{**}=G^{*}-\{u_{d+1}w\}+\{u_{d-1}w\}$, then $G^{**}\in \mathcal{U}_{n,d}$.
\begin{equation}
\begin{split}
& SO(G^{*})-SO(G^{**})\\
& = \sqrt{d_{G^{*}}^{2}(u_{d-2})+2^{2}}- \sqrt{d_{G^{*}}^{2}(u_{d-2})+3^{2}}  \\
&\quad +2\sqrt{2^{2}+3^{2}}+\sqrt{2^{2}+2^{2}}-\sqrt{1^{2}+3^{2}}-\sqrt{2^{2}+3^{2}}-\sqrt{3^{2}+3^{2}}\\
& < \sqrt{13}+2\sqrt{2}-\sqrt{10}-3\sqrt{2}\\
& < 0. \nonumber
\end{split}
\end{equation}

\noindent {\bf Case 2}. $|V(C)|=4$

The only one cycle $C=u_{d-1}u_{d}u_{d+1}wu_{d-1}$. Let $G^{**}=G^{*}-\{u_{d+1}w\}+\{u_{d}w\}$, then $G^{**}\in \mathcal{U}_{n,d}$.
\begin{equation}
\begin{split}
& SO(G^{*})-SO(G^{**})\\
& = 2\sqrt{2^{2}+2^{2}}-\sqrt{3^{2}+3^{2}}-\sqrt{1^{2}+3^{2}}  \\
& = \sqrt{2}-\sqrt{10} < 0. \nonumber
\end{split}
\end{equation}
Thus, {\bf Claim 3} holds.

\noindent {\bf Claim 4}. $|V(C)|=4$

On the contrary, we suppose $|V(C)|=3$, the only one cycle $C=u_{i}u_{i+1}wu_{i}$ $(3\leq i\leq d-1)$.
Since $\mathcal{U}^{*}=\{u\in VP(G^{*})| G^{*}-u\in \mathcal{U}_{n-1,d}\}\neq \varnothing$, then $d_{G^{*}}(u_{2})\geq 3$.

\noindent {\bf Case 1}. $i=3$

Let $G^{**}=G^{*}-\{u_{3}w\}+\{u_{2}w\}$, then $G^{**}\in \mathcal{U}_{n,d}$.
\begin{equation}
\begin{split}
& SO(G^{*})-SO(G^{**})\\
& = (d_{G^{*}}(u_{2})-1)\left(\sqrt{d_{G^{*}}^{2}(u_{2})+1^{2}}- \sqrt{(d_{G^{*}}(u_{2})+1)^{2}+1^{2}} \right) \\
&\quad +\sqrt{d_{G^{*}}^{2}(u_{2})+3^{2}}-2\sqrt{(d_{G^{*}}(u_{2})+1)^{2}+2^{2}}+\sqrt{3^{2}+3^{2}}\\
& < 0. \nonumber
\end{split}
\end{equation}

\noindent {\bf Case 2}. $i\geq 4$

Let $G^{**}=G^{*}-\{u_{i}w\}-\{u_{i+1}w\}+\{u_{2}w\}+\{u_{3}w\}$, then $G^{**}\in \mathcal{U}_{n,d}$.
\begin{equation}
\begin{split}
& SO(G^{*})-SO(G^{**})\\
& = (d_{G^{*}}(u_{2})-1)\left(\sqrt{d_{G^{*}}^{2}(u_{2})+1^{2}}- \sqrt{(d_{G^{*}}(u_{2})+1)^{2}+1^{2}} \right) \\
&\quad +\sqrt{d_{G^{*}}^{2}(u_{2})+2^{2}}+\sqrt{d_{G^{*}}^{2}(u_{i+2})+3^{2}}+\sqrt{3^{2}+3^{2}}\\
&\quad -2\sqrt{(d_{G^{*}}(u_{2})+1)^{2}+2^{2}}-\sqrt{d_{G^{*}}^{2}(u_{i+2})+2^{2}}+\sqrt{2^{2}+3^{2}}-\sqrt{2^{2}+2^{2}}\\
& \leq 2(\sqrt{3^{2}+1^{2}}-\sqrt{4^{2}+1^{2}})+\sqrt{d_{G^{*}}^{2}(u_{2})+2^{2}}-2\sqrt{(d_{G^{*}}(u_{2})+1)^{2}+2^{2}}\\
&\quad +(\sqrt{1^{2}+3^{2}}-\sqrt{1^{2}+2^{2}})+\sqrt{2}+\sqrt{13}\\
& \leq \sqrt{2}+3\sqrt{10}+\sqrt{13}-2\sqrt{17}-5\sqrt{5}\\
& < 0. \nonumber
\end{split}
\end{equation}
Thus, {\bf Claim 4} holds, then the only one cycle $C=u_{i}u_{i+1}u_{i+2}wu_{i}$ $(3\leq i\leq d-2)$, $d_{G^{*}}(u_{d+1})=1$, $d_{G^{*}}(u_{2})\geq 3$ and $d_{G^{*}}(u_{i+3})\geq1$.

\noindent {\bf Case 1}. $i=3$
\begin{equation}
\begin{split}
& SO(G^{*})-SO(U_{n}^{d})\\
& = (d_{G^{*}}(u_{2})-1)\left(\sqrt{d_{G^{*}}^{2}(u_{2})+1^{2}}- \sqrt{(d_{G^{*}}(u_{2})+1)^{2}+1^{2}} \right) \\
&\quad +\sqrt{d_{G^{*}}^{2}(u_{2})+3^{2}}+\sqrt{d_{G^{*}}^{2}(u_{6})+3^{2}}-\sqrt{d_{G^{*}}^{2}(u_{6})+2^{2}}\\
&\quad -2\sqrt{(d_{G^{*}}(u_{2})+1)^{2}+2^{2}}+\sqrt{2^{2}+3^{2}}\\
& < \sqrt{13}-\sqrt{20}+\sqrt{10}-\sqrt{5}+2(\sqrt{10}-\sqrt{17})\\
& < 0. \nonumber
\end{split}
\end{equation}

\noindent {\bf Case 2}. $4\leq i\leq d-2$
\begin{equation}
\begin{split}
& SO(G^{*})-SO(U_{n}^{d})\\
& = (d_{G^{*}}(u_{2})-1)\left(\sqrt{d_{G^{*}}^{2}(u_{2})+1^{2}}- \sqrt{(d_{G^{*}}(u_{2})+1)^{2}+1^{2}} \right) \\
&\quad -2\sqrt{(d_{G^{*}}(u_{2})+1)^{2}+2^{2}}+\sqrt{d_{G^{*}}^{2}(u_{2})+2^{2}}+2\sqrt{2^{2}+3^{2}}-\sqrt{2^{2}+2^{2}}\\
&\quad +\sqrt{d_{G^{*}}^{2}(u_{i+3})+3^{2}}-\sqrt{d_{G^{*}}^{2}(u_{i+3})+2^{2}}\\
& < \sqrt{13}-2\sqrt{20}+2\sqrt{13}-2\sqrt{2}+\sqrt{10}-\sqrt{5}\\
& < 0. \nonumber
\end{split}
\end{equation}

This is a contradiction with the assumption $G^{*}\in \mathcal{U}_{n,d}^{max}$.
Thus there must exist a vertex $u_{0}\in VP(G^{*})$, $G^{*}-u_{0}\in \mathcal{U}_{n-1,d}$, and $|Q_{G^{*}}(N_{G^{*}}(u_{0}))|\geq 2$.
\end{proof}

In the following, we will obtain the main results.

\begin{theorem}\label{t3-7}
Let $G\in \mathcal{U}(n,d)$ $($$4\leq d\leq n-2$$)$. Then $SO(G)\leq SO(U_{n}^{d})$, with equality iff $G\cong U_{n}^{d}$, where $U_{n}^{d}$ is shown in Figure \ref{fig-1}.
\end{theorem}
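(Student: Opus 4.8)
The plan is to induct on $n$ for fixed $d$, taking Theorem~\ref{t3-3} as the base case $n=d+2$ and, in the inductive step $n\ge d+3$, deleting a single pendent vertex from an extremal graph. Let $G^{*}\in\mathcal{U}_{n,d}^{max}$. By Lemma~\ref{l3-6} there is a pendent vertex $u_{0}$ with $G^{*}-u_{0}\in\mathcal{U}_{n-1,d}$ whose unique neighbour $v$ satisfies $|Q_{G^{*}}(v)|\ge 2$; put $k=d_{G^{*}}(v)$, so $k\ge 3$. Since $u_{0}$ is adjacent only to $v$, deleting it lowers $d_{G^{*}}(v)$ from $k$ to $k-1$ and leaves all other degrees unchanged, so with $\phi$ as in Lemma~\ref{l3-1},
$$SO(G^{*})-SO(G^{*}-u_{0})=\Delta:=\sqrt{k^{2}+1}+\sum_{w\in N_{G^{*}}(v)\setminus\{u_{0}\}}\phi\bigl(k,d_{G^{*}}(w)\bigr).$$

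The key step is to bound $\Delta$ by the corresponding quantity for $U_{n}^{d}$. Each pendent neighbour of $v$ contributes $\phi(k,1)$, while each of the (at least two) neighbours of degree $\ge 2$ contributes $\phi(k,d_{G^{*}}(w))\le\phi(k,2)$ because $\phi(k,\cdot)$ is strictly decreasing (Lemma~\ref{l3-1}). As $\phi(k,1)>\phi(k,2)$, the sum over the $k-1$ non-$u_{0}$ neighbours is largest when exactly two have degree $2$ and the remaining $k-3$ are pendent, giving
$$\Delta\le g(k):=\sqrt{k^{2}+1}+(k-3)\,\phi(k,1)+2\,\phi(k,2).$$
I would then establish two facts: first, every vertex of a graph in $\mathcal{U}_{n,d}$ has degree at most $n-d+1$ (a vertex has at most $n-d-1$ neighbours off the diametral path $P$, and at most two neighbours on $P$, since $P$ is a shortest path), so $k\le n-d+1$; second, $g$ is strictly increasing for $k\ge 3$, since each summand is increasing in $k$ by Lemma~\ref{l3-1}. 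Reading off the displayed formula for $SO(U_{n}^{d})$ shows $g(n-d+1)=SO(U_{n}^{d})-SO(U_{n-1}^{d})$, whence $\Delta\le g(k)\le g(n-d+1)=SO(U_{n}^{d})-SO(U_{n-1}^{d})$. Combining with the induction hypothesis $SO(G^{*}-u_{0})\le SO(U_{n-1}^{d})$ yields $SO(G^{*})\le SO(U_{n}^{d})$.

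For the equality characterisation, note that $U_{n}^{d}\in\mathcal{U}_{n,d}$ forces $SO(G^{*})\ge SO(U_{n}^{d})$, so the whole chain collapses to equalities. Then $SO(G^{*}-u_{0})=SO(U_{n-1}^{d})$ gives $G^{*}-u_{0}\cong U_{n-1}^{d}$ by induction, and $g(k)=g(n-d+1)$ together with the strict monotonicity of $g$ forces $k=n-d+1$. Since the only vertex of degree $n-d$ in $U_{n-1}^{d}$ is its centre, $v$ must be that centre, and reattaching $u_{0}$ there reproduces exactly $U_{n}^{d}$; hence $G^{*}\cong U_{n}^{d}$.

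The main obstacle is the bound $\Delta\le g(k)$ and the identification of the correct comparison function $g$. The hypothesis $|Q_{G^{*}}(v)|\ge 2$ furnished by Lemma~\ref{l3-6} is essential here: if $v$ had only one neighbour of degree $\ge 2$, the analogous bound would carry an extra $\phi(k,1)$ in place of a $\phi(k,2)$, and since $\phi(n-d+1,1)>\phi(n-d+1,2)$ this value would exceed $SO(U_{n}^{d})-SO(U_{n-1}^{d})$, so the induction would not close. Everything else reduces to the monotonicity of $\phi$ and routine verification of the degree bound and of the identity $g(n-d+1)=SO(U_{n}^{d})-SO(U_{n-1}^{d})$.
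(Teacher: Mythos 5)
Your proposal is correct and follows essentially the same route as the paper: induction on $n$ with Theorem~\ref{t3-3} as base case, invoking Lemma~\ref{l3-6} to delete a pendent vertex $u_{0}$ whose neighbour $v$ has at least two non-pendent neighbours, bounding the resulting degree-sum difference by the extremal configuration $(k-3)\,\phi(k,1)+2\,\phi(k,2)$, and then pushing $k$ up to $n-d+1$. Your write-up is in fact slightly more explicit than the paper's (justifying the degree bound $k\le n-d+1$, the monotonicity of $g$, and the equality case), but the decomposition and key estimates are identical.
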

\begin{proof}
If $n=d+2$, conclusion holds (Theorem \ref{t3-3}).
Suppose conclusion holds for $n-1$.

Let $G^{*}\in \mathcal{U}(n,d)$, $4\leq d\leq n-3$ with maximum $SO(G^{*})$. By Lemma \ref{l3-6}, there must exist $u\in VP(G^{*})$, $G^{*}-u\in \mathcal{U}_{n-1,d}$, and $v=N_{G^{*}}(u)$ connected to at least two non-pendent vertices, say $w_{1}$, $w_{2}$.
Denote $|N_{G^{*}}(v)|=k$, then $3\leq k\leq n-d+1$. $N_{G^{*}}(v)=\{u,v_{1},v_{2},\cdots,v_{k-1}\}$. Denote $d_{G^{*}}(v_{i})=k_{i}$ for $1\leq i\leq k-1$.

Let $G^{**}=G^{*}-u$, then $G^{**}\in \mathcal{U}_{n-1,d}$.
\begin{equation}
\begin{split}
SO(G^{*})&=SO(G^{**})+\sqrt{k^{2}+1^{2}}+\sum_{i=1}^{k-1} \left(\sqrt{k^{2}+k_{i}^{2}}- \sqrt{(k-1)^{2}+k_{i}^{2}}  \right)\\
&\leq SO(U_{n-1}^{d})+ \sqrt{k^{2}+1^{2}}+ 2(\sqrt{k^{2}+2^{2}}-\sqrt{(k-1)^{2}+2^{2}})  \\
&\quad  +(k-3)(\sqrt{k^{2}+1^{2}}-\sqrt{(k-1)^{2}+1^{2}})\\
&= (n-d-2)\sqrt{(n-d)^{2}+1}+2\sqrt{(n-d)^{2}+4}+F_{1}\\
&\quad  +2\sqrt{k^{2}+2^{2}}+(k-2)\sqrt{k^{2}+1^{2}}-2\sqrt{(k-1)^{2}+2^{2}}-(k-3)\sqrt{(k-1)^{2}+1^{2}}\\
&\leq (n-d-2)\sqrt{(n-d)^{2}+1}+2\sqrt{(n-d)^{2}+4}+F_{1}\\
&\quad  +\sqrt{(n-d+1)^{2}+1}+2(\sqrt{(n-d+1)^{2}+2^{2}}-\sqrt{(n-d)^{2}+2^{2}})\\
&\quad  +(n-d-2)(\sqrt{(n-d+1)^{2}+1}-\sqrt{(n-d)^{2}+1})\\
&= (n-d-1)\sqrt{(n-d+1)^{2}+1}+2\sqrt{(n-d+1)^{2}+4}+F_{1}\\
&= SO(U_{n}^{d}).  \nonumber
\end{split}
\end{equation}
Thus $SO(G)\leq SO(U_{n}^{d})$, with equality if and only if $G\cong U_{n}^{d}$.
\end{proof}

Similarly to the proof of Theorem \ref{t3-7}, we also have
\begin{theorem}\label{t3-8}
Let $G\in \mathcal{U}(n,d)$ $($$4\leq d\leq n-2$$)$. Then $SO_{red}(G)\leq SO_{red}(U_{n}^{d})$, with equality if and only if $G\cong U_{n}^{d}$.
\end{theorem}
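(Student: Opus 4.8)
The plan is to repeat the inductive scheme of the proof of Theorem \ref{t3-7} verbatim, replacing $SO$ by $SO_{red}$ throughout. The induction is on $n$, with base case $n=d+2$. Concretely, I would first establish the reduced analogs of all the preparatory results: the $SO_{red}$ version of the base case Theorem \ref{t3-3}, and reduced versions of Lemmas \ref{l3-4}, \ref{l3-5}, and \ref{l3-6}. The structural content of these lemmas — locating the unique cycle $C$ relative to the diametral path $P=u_1u_2\cdots u_du_{d+1}$, forcing $|V(C)\cap V(P)|\geq 2$, producing a removable pendent vertex $u_0$ with $G^*-u_0\in\mathcal{U}_{n-1,d}$ whose neighbor meets at least two non-pendent vertices — is purely combinatorial and transfers unchanged. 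What must be redone is the sign of each local difference $SO_{red}(G^*)-SO_{red}(G^{**})$ produced by the same edge-swapping transformations $G^*\to G^{**}$, now using the edge weight $\sqrt{(d_u-1)^2+(d_v-1)^2}$.

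The two monotonicity inputs carry over cleanly. Lemma \ref{l3-1} has an immediate reduced counterpart: writing $\phi_{red}(x,y)=\sqrt{(x-1)^2+(y-1)^2}-\sqrt{(x-2)^2+(y-1)^2}=\phi(x-1,y-1)$, the function $\phi_{red}$ inherits strict monotonicity (increasing in $x$, decreasing in $y$). Lemma \ref{l3-01} applies verbatim, since it is already stated for general $a\geq 0$, $p\geq 1$. With these in hand, each inequality of the shape $SO(G^*)-SO(G^{**})<0$ in the lemmas becomes a corresponding reduced inequality after the uniform degree shift, and I would re-verify that the new numerical constants (those assembled into $F_2=2\sqrt5+2$ for $d=4$ and $F_2=\sqrt2(d-5)+3\sqrt5+1$ for $d\geq5$) retain the strict sign.

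For the induction step I would delete the pendent vertex $u_0$ supplied by the reduced Lemma \ref{l3-6}, let $v=N_{G^*}(u_0)$ have degree $k$ in $G^*$ with other neighbors of degrees $k_1,\dots,k_{k-1}$, and write
\begin{equation}
SO_{red}(G^*)=SO_{red}(G^{**})+(k-1)+\sum_{i=1}^{k-1}\left(\sqrt{(k-1)^2+(k_i-1)^2}-\sqrt{(k-2)^2+(k_i-1)^2}\right),\nonumber
\end{equation}
where the edge $u_0v$ contributes $\sqrt{(k-1)^2+0}=k-1$ because $u_0$ is pendent. I would then bound the sum by the reduced monotonicity (the right-hand side is largest when two neighbors have degree $2$ and the remaining $k-3$ are pendent, and $k$ is forced to $n-d+1$), invoke the induction hypothesis $SO_{red}(G^{**})\leq SO_{red}(U_{n-1}^{d})$, and simplify to reach $SO_{red}(U_n^{d})$. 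Equality is tracked at each step exactly as in Theorem \ref{t3-7}, forcing $G^*\cong U_n^{d}$.

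The genuine work, and the main obstacle, is the bookkeeping in the reduced numerical inequalities rather than the high-level structure. Because $SO_{red}$ subtracts $1$ from every degree, a pendent vertex now contributes weight $0$ along its own coordinate; this collapses several terms and can change the margin by which a given inequality holds, so no case of the reduced analogs of Lemmas \ref{l3-4}--\ref{l3-6} may simply be quoted from the $SO$ argument. Each must be recomputed to confirm strict inequality, and in particular that no case degenerates to an equality or reverses sign. Once all these sign checks pass, the induction and the equality analysis are a mechanical transcription of the $SO$ proof.
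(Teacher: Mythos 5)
Your proposal matches the paper exactly: the paper gives no separate argument for Theorem \ref{t3-8}, stating only that it follows ``similarly to the proof of Theorem \ref{t3-7}'', which is precisely the degree-shifted transcription you describe (and your outline, including the observation that the pendent edge now contributes $k-1$ and that every numerical sign check must be redone, is more explicit than the paper's own treatment). No gap.
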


\section{Conclusions}

Let $U(n,a,b,c)$, where $a\geq b\geq c\geq 0$ and $a+b+c=n-3$, be a unicyclic graph obtained from $C_{3}$ by
attaching $a$, $b$ and $c$ pendent vertices to every vertex of $C_{3}$.
It was proved in \cite{ctra2021} that $U(n,n-3,0,0)$ is the maximum unicyclic graph with respect to Sombor index.
It is easily to prove that $U(n,n-4,1,0)$ is the second maximum unicyclic graph with respect to Sombor index.
Since $\{C_{3}\}=\mathcal{U}_{n,1}$, $U(n,n-3,0,0)\in \mathcal{U}_{n,2}$ and $U(n,n-4,1,0)\in \mathcal{U}_{n,3}$, thus $\{C_{3}\}$, $U(n,n-3,0,0)$ and $U(n,n-4,1,0)$ are the extremal graph with
maximum Sombor index among $\mathcal{U}_{n,1}$, $\mathcal{U}_{n,2}$ and $\mathcal{U}_{n,3}$, respectively.
\begin{figure}[ht!]
  \centering
  \scalebox{.16}[.16]{\includegraphics{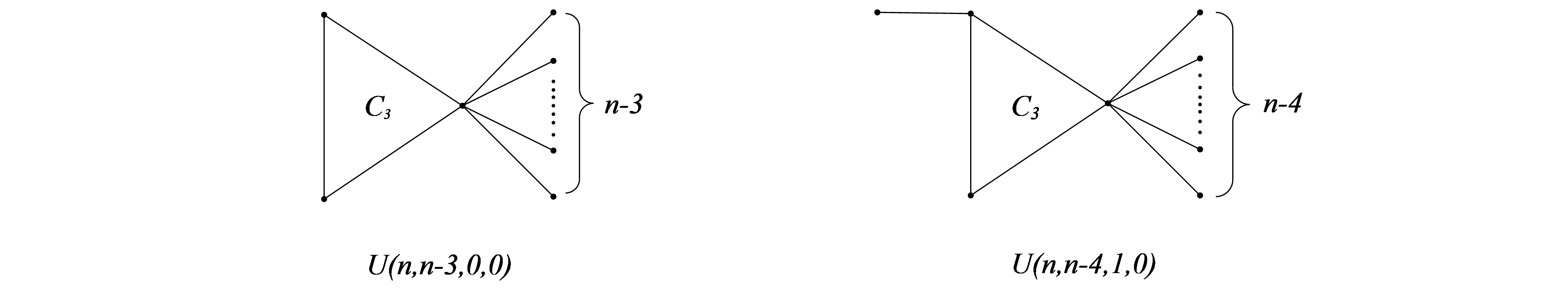}}
  \caption{The graphs $U(n,n-3,0,0)$ and $U(n,n-4,1,0)$.}
 \label{fig-2}
\end{figure}

Recently, Chen et al. \cite{chli2021} and Li et al. \cite{lizh2022} considered the Sombor indices of trees with a given diameter. In this paper, we completely determine the maximum Sombor indices for unicyclic graphs with a given diameter.
By the way, the maximum Sombor indices for bicyclic graphs with a given diameter had also been considered in our next paper.

\end{document}